\newcounter{numb}
\theoremstyle{plain} %% This is the default
\newtheorem{theorem}{Theorem}[section]
\newtheorem{corollary}[theorem]{Corollary}
\newtheorem{proposition}[theorem]{Proposition}
\newtheorem{remark}[theorem]{Remark}
\numberwithin{equation}{section}
\newcommand{\w}{\wedge}
\newcommand{\p} {\partial}
\newcommand{\g} {\mathfrak {g}}
\newcommand{\kl}{\mathfrak {K}}
\newcommand{\h}{\mathfrak {h}}
\newcommand{\D}{\mathfrak {D}}
\begin{document}
\title{A Relative Theory for Leibniz $n$-Algebras }
\author{Guy Roger Biyogmam}
\date{}
\maketitle{Department of Mathematics,\\Southwestern Oklahoma State University,\\100 Campus Drive,\\Weatherford, OK 73096, USA,\\ \texttt{Email:}\textit{guy.biyogmam@swosu.edu}.}

\begin{abstract}
In this paper we show that for a $n$-Filippov algebra $\g,$ the tensor power $\g^{\otimes n-1}$ is endowed with a structure of anti-symmetric co-representation over the Leibniz algebra $\g^{\w n-1}$. This co-representation is  used to define two relative theories for Leibniz $n$-algebras with $n>2$ and obtain exact sequences relating them. As a result, we construct a spectral sequence for the Leibniz homology of Filippov algebras. %This spectral sequence  does not generalize  the spectral sequence constructed by Pirashvili \cite{TP}  for Leibniz 2-algebras (or simply, Leibniz algebras).\\
\end{abstract}
\textbf{Mathematics Subject Classifications(2000):} 17A32, 17B99, 18G99.\\
\textbf{Key Words}: Leibniz $n$-algebras, Filippov algebras, Leibniz homology.

\section{Introduction and Generalities}
The concept of Filippov algebras (also known as $n$-Lie algebras) was first introduced in 1985 by Filippov \cite{F} and was generalized to the concept of Leibniz $n$-algebras by Casas, Loday and Pirashvili \cite{CLP}. Both concepts are of a considerable importance in Nambu Mechanics \cite{N}. Applications of Filippov algebras are found in String theory \cite{B} and in  Yang-Mills theory \cite{FU}.  In section 3, we construct a filtration of a certain complex relative to the complex defined by Casas \cite {C} and  study  the corresponding spectral sequence.\\
Recall that given a field $\kl$ of characteristic different to 2, a Leibniz $n$-algebra \cite{CLP} is defined as a $\kl$ -vector space $\g$ equipped with an $n$-linear operation $[-,\ldots,-]_{\g}: \g^{\otimes n}\longrightarrow \g$ satisfying the identity 
$$[[x_1,\ldots,x_n]_{\g},y_1,\ldots,y_{n-1}]_{\g}=\sum^{n}_{i=1}[x_1,\ldots,x_{i-1},[x_i,y_1,\ldots,y_{n-1}]_{\g},x_{i+1},\ldots,x_n]_{\g}.~~~(1.1)$$
Notice that in the case where the $n$-linear operation $[-,\ldots,-]_{\g}$ is anti-symmetric in each pair of variables, i.e., $$[x_1,x_2,\ldots,x_i,\ldots,x_j,\ldots,x_n]_{\g}=-[x_1,x_2,\ldots,x_j,\ldots,x_i,\ldots,x_n]_{\g}$$ or equivalently $[x_1,x_2,\ldots,x,\ldots,x,\ldots,x_n]_{\g}=0~\mbox{for all $x \in$ G},$ the Leibniz $n$-algebra becomes a Filippov algebra (more precisely a $n$-Filippov algebra). Also, a Leibniz $2$-algebra is exactly a Leibniz algebra \cite[p.326]{L} and become a Lie algebra if the binary operation $[~,~]_{\g}$ is skew symmetric.

 For a Leibniz $n$-algebra $\g,$  It is shown that  \cite[proposition 3.4]{CLP} $D_n(\g)=\g^{\otimes n-1}$ is a Leibniz algebra with the bilinear product $\omega_1:D_n(\g)\times D_n(\g)\rightarrow D_n(\g),$ $$\omega_1(a_1\otimes\ldots\otimes a_{n-1},b_1\otimes\ldots\otimes b_{n-1})=\sum^{n-1}_{i=1}a_1\otimes\ldots\otimes [a_i,b_1,\ldots,b_{n-1}]_{\g}\otimes\ldots\otimes a_{n-1}.$$
Similarly for  a $n$-Filippov algebra $\g$, the bracket $\bar{\omega}_0:L_n(\g)\times L_n(\g)\rightarrow L_n(\g),$ $$\bar{\omega}_0 (a_1\w\ldots\w a_{n-1},b_1\w\ldots\w b_{n-1})=\sum^{n-1}_{i=1}a_1\w\ldots\w [a_i,b_1,\ldots,b_{n-1}]_{\g}\w\ldots\w a_{n-1}$$ provides $L_n(\g)=\g^{\w n-1}$ with a Leibniz algebra structure (see \cite{FO}).

\section{Homology of Leibniz $n$-algebras}
Casas  introduced homology of Leibniz $n$-algebras $_{n}HL_*(\g,\kl)$ with trivial coefficients \cite{C}. In this paper we introduce a spectral sequence as a tool to compute this homology.
 Recall from \cite{LP} that for a Leibniz algebra $\h$ over a ring $\kl$, a co-representation  of $\h$ is a $\kl$-module $M$ equipped with two actions of $\h$, $$[-,-]:\h\times M\rightarrow M ~~\mbox{and}~~[-,-]:\h\times M\rightarrow M$$ satisfying the following axioms
\begin{itemize}
\item $[[x,y],m]=[x,[y,m]]-[y,[x,m]]$
\item $[y,[m,x]]=[[y,m],x]-[m,[x,y]]$
\item $[[m,x],y]=[m,[x,y]]-[[y,m],x].$
\end{itemize} 
A co-representation is called anti-symmetric when $[m,x]=0,~x\in\g,~m\in M.$

\begin{proposition}\label{corg}
Let $\g$ be a Leibniz $n$-algebra (resp. $n$-Filippov algebra). Then $\g$ is a co-representation  over the leibniz algebra  $D_n(\g)=\g^{\otimes (n-1)}$ (resp.  $L_n(\g)$). Morever the corresponding actions can be extended to $D_n(\g)^{\otimes k}$ and $L_n(\g)^{\otimes k}$ respectively.
\end{proposition}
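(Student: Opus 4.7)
The proof will be a direct check of the Loday–Pirashvili co-representation axioms, with the Leibniz $n$-algebra identity (1.1) as the sole ingredient.

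First I define the two actions of $D_n(\g)$ on $\g$. For $a=a_1\otimes\cdots\otimes a_{n-1}\in D_n(\g)$ and $m\in\g$, set
\[
[a,m]:=[a_1,\ldots,a_{n-1},m]_{\g}\quad\text{and}\quad [m,a]:=[m,a_1,\ldots,a_{n-1}]_{\g}.
\]
In the Filippov case, the anti-symmetry of the $n$-bracket ensures that both formulas descend to $L_n(\g)=\g^{\w (n-1)}$; moreover they then agree up to the sign $(-1)^{n-1}$, so setting $[m,a]=0$ recovers the anti-symmetric co-representation announced in the abstract.

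Next I verify the three co-representation axioms. After substituting the formula for $\omega_1$ (resp.\ $\bar\omega_0$) into $[a,b]$ and the two actions into the mixed brackets, each axiom becomes a statement about nested $n$-ary brackets: the left-hand side is an $n$-bracket applied to an $n$-bracket together with $n-1$ further arguments, and the right-hand side is the sum over positions obtained by pushing the inner sub-bracket into one slot at a time. This is precisely the content of (1.1), so each axiom follows from one appropriate choice of the $n$-tuple $(x_1,\ldots,x_n)$ and the string $(y_1,\ldots,y_{n-1})$; the anti-symmetry of $[-,\ldots,-]_\g$ is used to shuffle $m$ between the outer and inner slots. In the Filippov/anti-symmetric setting the axioms involving the right action collapse to $0=0$, so only axiom (I) retains content.

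Finally I extend the actions to $D_n(\g)^{\otimes k}$ (resp.\ $L_n(\g)^{\otimes k}$) by the derivation rule
\[
[a,\,u_1\otimes\cdots\otimes u_k]:=\sum_{j=1}^{k} u_1\otimes\cdots\otimes[a,u_j]\otimes\cdots\otimes u_k,
\]
and analogously on the right. A short induction on $k$ establishes the three axioms for the extended actions: since the derivation rule distributes across independent tensor slots, each axiom at level $k$ splits slot-by-slot, and slotwise equality is exactly the $k=1$ case already verified.

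The only obstacle is combinatorial bookkeeping---keeping straight which of the $n-1$ arguments of each factor of $D_n(\g)$ is being operated on, and, in the Filippov case, verifying well-definedness on the wedge quotient. Once the signs and notation are fixed, every step is an immediate rewriting via (1.1), so the proof has no conceptual difficulty beyond the initial choice of the action formulas.
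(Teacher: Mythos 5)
There is a genuine gap: your choice of action formulas is wrong for the Leibniz $n$-algebra half of the statement. You set $[a,m]:=[a_1,\ldots,a_{n-1},m]_{\g}$, placing $m$ in the \emph{last} slot of the bracket. The fundamental identity (1.1) only controls expressions in which an $(n-1)$-tuple acts on the remaining argument sitting in the \emph{first} slot; accordingly the correct actions (the ones the paper takes from \cite{CJ}) are $\omega_0^l(a_1\otimes\cdots\otimes a_{n-1},m)=-[m,a_1,\ldots,a_{n-1}]_{\g}$ and $\omega_0^r(m,a_1\otimes\cdots\otimes a_{n-1})=[m,a_1,\ldots,a_{n-1}]_{\g}$. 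With your formula, axiom (I) already fails for $n=2$: writing $L_x(m)=[x,m]_{\g}$ and $R_x(m)=[m,x]_{\g}$, identity (1.1) gives $R_{[y,z]}=R_zR_y-R_yR_z$, so the paper's left action $-R_a$ satisfies $\lambda_{[x,y]}=[\lambda_x,\lambda_y]$, whereas your left action $L_a$ would need $L_{[x,y]}=[L_x,L_y]$, which is false for a (right) Leibniz algebra. Your escape hatch --- ``the anti-symmetry of $[-,\ldots,-]_{\g}$ is used to shuffle $m$ between the outer and inner slots'' --- is unavailable precisely in the Leibniz $n$-algebra case, which is half of the proposition. So the verification you sketch cannot be carried out from (1.1) as claimed.

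Two further points. First, you conflate this proposition with Proposition \ref{corep}: the anti-symmetric co-representation announced in the abstract is $D_n(\g)$ over $L_n(\g)$, not $\g$ over $D_n(\g)$. Here the right action $\omega_0^r(m,a)=[m,a_1,\ldots,a_{n-1}]_{\g}$ is nontrivial and must not be set to zero --- it is exactly the action $[v,h_j]$ appearing in the Loday boundary map that defines $_{n}HL_*(\g;\kl)$. Second, for comparison: the paper does not verify the axioms at all; it simply cites \cite[proposition 3.1]{CJ} and records the two maps. A direct verification along your lines would be a legitimate, more self-contained alternative, but only after correcting the slot in which $m$ sits (and noting that the extension to $D_n(\g)^{\otimes k}$ by the derivation rule does not literally ``split slot-by-slot''; the commutator axioms produce cross terms between distinct tensor slots that must be seen to cancel in pairs).
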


\begin{proof}
 The actions for $D_n(\g)$ are obtained by considering the trivial co-representation of  $\g$  in \cite[proposition 3.1]{CJ}.  The maps are precisely\\

$\omega_0^l(-,-): D_n(\g)\otimes\g\longrightarrow\g$ defined by $\omega_0^l(x_1\otimes\ldots\otimes x_{n-1},x):=-[x,x_1,\ldots,x_{n-1}].$

$\omega_0^r(-,-): \g\otimes D_n(\g)\longrightarrow\g$ defined by $\omega_0^r(x,x_1\otimes\ldots\otimes x_{n-1}):=[x,x_1,\ldots,x_{n-1}].$\\

The actions for $L_n(\g)$ are similar. 

\end{proof}

Recall  \cite[p.328]{L} that if $\h$ is a  Leibniz algebra and $A$ is a co-representation of $\h$, then the  Leibniz homology of $\h$ with coefficients in $A$ written $HL_*(\h;~A),$ is the homology of the chain complex   $CL_*(\h,A):=A\otimes T^*(\h);$ namely $$A\stackrel{d}{\longleftarrow} A\otimes \h^{\otimes^1}\stackrel{d}{\longleftarrow} A\otimes \h^{\otimes^2}\stackrel{d}{\longleftarrow}\ldots\stackrel{d}{\longleftarrow} A\otimes\h^{\otimes^{k-1}}\stackrel{d}{\longleftarrow} A\otimes\h^{\otimes^k}\leftarrow\ldots$$ where $\h^{\otimes^k}$ is the $n$th fold tensor power of $\h$ over $\kl,$ and where 
$$d(v\otimes h_1\otimes\ldots\otimes h_k)=\sum_{1\leq j\leq k}(-1)^j[v,~h_j]\otimes h_1\otimes\ldots \widehat{h_j}\ldots\otimes h_n$$ $$ +\sum_{1\leq i<j\leq k}(-1)^{j+1}v\otimes h_1\otimes\ldots\otimes h_{j-1}\otimes [h_i,~h_j]\ldots\widehat{h_j}\ldots\otimes h_k.$$ 

For a Leibniz $n$-algebra $\g$ over the field $\kl,$ $D_n(\g)$ acts on $\g$ via $\omega_0$ and  Casas defined the chain complex $$_{n}CL_*(\g):=CL_*(D_{n}(\g),\g),$$ and then defined the homology with trivial coefficients for Leibniz $n$-algebras as the homology of this complex;  i.e. $$_{n}HL_*(\g;~\kl):= HL_*(D_{n}(\g);~\g)~~\cite{C}.$$ 

Also for a Filippov algebra $\g$ over the field $\kl,$ one defines $_{n}H^{Lie}_*(\g,~\kl)$ as the homology of the complex $_nC_*(\g):=CL_*(L_{n}(\g),\g)$ i.e.,  $$_{n}H^{Lie}_*(\g;~\kl):=HL_*(L_{n}(\g);~\g)~~\cite{A}.$$

\begin{proposition}\label{corep}
Let $\g$ be a $n$-filippov algebra. The $n$-ary operation $[-,\ldots,-]_{\g}$ induces an anti-symmetric co-representation   $D_n(\g)$  over the Leibniz algebra $L_n(\g).$ 
%defined by  $$\omega_2(a_1\w\ldots\w a_{n-1},b_1\otimes\ldots\otimes b_{n-1})=-\sum^{n-1}_{i=1}b_1\w\ldots\w [b_i,a_1,\ldots,a_{n-1}]_{\g}\w\ldots\w b_{n-1}.$$ Similarly $L_n(\g)$ acts on $D_n(\g)$ via $\omega_3$ defined by $$\omega_3(a_1\w\ldots\w a_{n-1},b_1\otimes\ldots\otimes b_{n-1})=\sum^{n-1}_{i=1}a_1\otimes\ldots\otimes [a_i,b_1,\ldots,b_{n-1}]_{\g}\otimes\ldots\otimes a_{n-1}.$$These actions can be linearly extended to $D_n(\g)^{\otimes k}$ and $L_n(\g)^{\otimes k}$ in a natural way. For example, the action of $L_n(\g)$ on $D_n(\g)^{\otimes k}$ is given by $$W_3(a_1\w\ldots\w a_{n-1},B_1\otimes\ldots\otimes B_k)=\sum^{k}_{i=1}B_1\otimes\ldots\otimes \omega_3(a_1\w\ldots\w a_{n-1},B_i)\otimes\ldots\otimes B_{k}$$ where $B_1,\ldots,B_k\in D_n(\g).$

\end{proposition}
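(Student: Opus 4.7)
The plan is to exhibit the left action, set the right action to zero (that is what \emph{anti-symmetric} demands), and then reduce the three co-representation axioms to the fundamental identity (1.1) of the Leibniz $n$-algebra.

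First I would define, for $A = a_1\w\ldots\w a_{n-1}\in L_n(\g)$ and $M = m_1\ot\ldots\ot m_{n-1}\in D_n(\g)$,
\[
[A,M]\;:=\;-\sum_{i=1}^{n-1} m_1\ot\ldots\ot [m_i,a_1,\ldots,a_{n-1}]_{\g}\ot\ldots\ot m_{n-1},
\qquad [M,A]:=0.
\]
Well-definedness on the wedge factor is immediate from the anti-symmetry of the $n$-Filippov bracket in its last $n-1$ slots. The normalization sign is chosen to match the conventions of $\omega_0^{l}$ in Proposition~\ref{corg}; I would flag this so that the bookkeeping in the verification below comes out right. Because $[M,A]=0$, the second and third co-representation axioms reduce to $0=0$, and only the first axiom
\[
[[A,B],M]=[A,[B,M]]-[B,[A,M]]
\]
requires genuine work.

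To prove that identity I would expand each side slot by slot in the tensor $M$. In $[A,[B,M]]$ and $[B,[A,M]]$, the ``off-diagonal'' contributions (where the $A$-bracket and the $B$-bracket hit two different tensor factors $m_j$ and $m_i$ with $j\neq i$) appear symmetrically in both terms and cancel in the difference. What survives is the ``diagonal'' contribution
\[
[A,[B,M]]-[B,[A,M]]=\sum_{i=1}^{n-1} m_1\ot\ldots\ot\bigl([[m_i,B],A]-[[m_i,A],B]\bigr)\ot\ldots\ot m_{n-1},
\]
while on the other side, using the explicit formula for $\bar\omega_0([A,B])$ from Section~1,
\[
[[A,B],M] = -\sum_{i=1}^{n-1} m_1\ot\ldots\ot\sum_{k=1}^{n-1}[m_i,a_1,\ldots,[a_k,b_1,\ldots,b_{n-1}]_{\g},\ldots,a_{n-1}]_{\g}\ot\ldots\ot m_{n-1}.
\]
Matching the two expressions slot-by-slot reduces the axiom to the single identity
\[
\sum_{k=1}^{n-1}[m_i,a_1,\ldots,[a_k,b_1,\ldots,b_{n-1}]_{\g},\ldots,a_{n-1}]_{\g}
= [[m_i,A],B]-[[m_i,B],A],
\]
which is exactly the Leibniz $n$-algebra identity (1.1) applied to $(x_1,\ldots,x_n)=(m_i,a_1,\ldots,a_{n-1})$ and $(y_1,\ldots,y_{n-1})=(b_1,\ldots,b_{n-1})$, after isolating the $k=0$ summand (the $[m_i,\cdot]$ slot) on the opposite side.

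The main obstacle is really just sign/index bookkeeping: getting the minus sign in the definition of $[A,M]$ consistent with (1.1), and making sure the cancellation of off-diagonal terms is treated correctly when $j<i$ versus $j>i$. Once the diagonal identity is in place, the extension to $L_n(\g)^{\ot k}$ alluded to in Proposition~\ref{corg} is a formal consequence, obtained by letting each tensor factor of $L_n(\g)$ act successively in the same way, exactly as in the Leibniz algebra case.
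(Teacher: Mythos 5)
Your proposal is correct and follows essentially the same route as the paper: the identical left action with trivial right action, reduction to the first co-representation axiom, cancellation of the off-diagonal cross terms, and a slot-by-slot application of identity (1.1). The only (welcome) addition is your explicit remark on well-definedness over the wedge factor, which the paper leaves implicit.
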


\begin{proof}
The right action $\omega_2^r(-,-): D_n(\g)\otimes L_n(\g)\longrightarrow D_n(\g) $  is trivial, and the left action is given by the  map  $\omega_2^l(-,-): L_n(\g)\otimes D_n(\g)\longrightarrow D_n(\g) $ defined by 
 $$\omega_2^l(a_1\w\ldots\w a_{n-1},b_1\otimes\ldots\otimes b_{n-1})=-\sum^{n-1}_{i=1}b_1\otimes\ldots\otimes [b_i,a_1,\ldots,a_{n-1}]_{\g}\otimes\ldots\otimes b_{n-1}.$$
Since the right action is trivial, we just need to show  for  $x_i, y_i, b_i,\in\g, i=1\ldots n-1$ and setting $\w_{i=1}^{n-1}x_i:=x_1\w \ldots\w x_{n-1},$  $\w_{i=1}^{n-1}y_i:=y_1\w\ldots\w y_{n-1},$  $\otimes_{i=1}^{n-1}b_i:= b_1\otimes\ldots\otimes b_{n-1},$ that  
$$
\begin{aligned}
 \omega^l_2\big(\bar{\omega}_0(\w_{i=1}^{n-1}x_i,\w_{i=1}^{n-1}y_i),\otimes_{i=1}^{n-1}b_i\big)&=\omega_2^l\big(\w_{i=1}^{n-1}x_i, \omega^l_2(\w_{i=1}^{n-1}y_i, \otimes_{i=1}^{n-1}b_i)\big)\\&~~~~-\omega_2^l\big(\w_{i=1}^{n-1}y_i, \omega^l_2(\w_{i=1}^{n-1}x_i, \otimes_{i=1}^{n-1}b_i)\big).
\end{aligned}
$$
On one hand, we have by definition of $\omega^l_2,$ $\bar{\omega_0}$ and the identity $(1.1)$
$$\omega^l_2\big(\bar{\omega}_0(\w_{i=1}^{n-1}x_i,\w_{i=1}^{n-1}y_i),\otimes_{i=1}^{n-1}b_i\big)=\omega_2^l\big((\sum_{i=1}^{n-1}x_1\w\ldots\w [x_i,y_1,\ldots,y_{n-1}]_{\g}\w\ldots\w x_{n-1}),~\otimes_{i=1}^{n-1}b_i\big)$$ $$= \sum_{i=1}^{n-1}\omega_2^l\big((x_1\w\ldots\w [x_i,y_1,\ldots,y_{n-1}]_{\g}\w\ldots\w x_{n-1}),~\otimes_{i=1}^{n-1}b_i\big)~~~~~~~$$ $$~~~~~~~~=\sum_{i=1}^{n-1}\big(-\sum_{j=1}^{n-1} b_1\otimes\ldots\otimes[
b_j,~x_1,\ldots,[x_i,y_1,\ldots,y_{n-1}],\ldots,x_{n-1}]_{\g}\otimes\ldots b_{n-1}\big)$$$$=\big(-\sum_{j=1}^{n-1} b_1\otimes\ldots\otimes\big[[
b_j,~x_1,\ldots,x_{n-1}]_{\g},y_1,\ldots,y_{n-1}\big]\otimes\ldots \otimes b_{n-1}\big)~~$$$$~~~~~~~~~~~~~~~+\big(\sum_{j=1}^{n-1} b_1\otimes\ldots\otimes\big[[
b_j,~y_1,\ldots,y_{n-1}]_{\g},x_1,\ldots,x_{n-1}\big]\otimes\ldots \otimes b_{n-1}\big).$$

 On the other hand,

$$\omega_2^l\big(\w_{i=1}^{n-1}x_i, \omega^l_2(\w_{i=1}^{n-1}y_i, \otimes_{i=1}^{n-1}b_i)\big)=  \omega_2^l\big(\w_{i=1}^{n-1}x_i,-\sum_{i=1}^{n-1}b_1\otimes\ldots\otimes [b_i,y_1,\ldots,y_{n-1}]_{\g}\otimes\ldots\otimes b_{n-1}\big)$$ $$=  -\sum_{i=1}^{n-1}\omega_2^l\big(\w_{i=1}^{n-1}x_i,~ b_1\otimes\ldots\otimes [b_i,y_1,\ldots,y_{n-1}]_{\g}\otimes\ldots\otimes b_{n-1}\big)~~~~~~~~~~~~~~~~~~~~$$$$~~~=\sum_{i=1}^{n-1}\big(\sum^{n-1}_{j=1, j\neq i} b_1\otimes\ldots\otimes[b_j,x_1,\ldots,x_{n-1}]_{\g}\otimes\ldots\otimes [b_i,y_1,\ldots,y_{n-1}]_{\g}\otimes\ldots\otimes b_{n-1}\big)$$ $$ +\sum_{i=1}^{n-1} b_1\otimes\ldots\otimes\big[[
b_i,~y_1,\ldots,y_{n-1}]_{\g},x_1,\ldots,x_{n-1}\big]\otimes\ldots \otimes b_{n-1}~~~$$
and 

 $$\omega_2^l(\w_{i=1}^{n-1}y_i, \omega^l_2((\w_{i=1}^{n-1}x_i, \otimes_{i=1}^{n-1}b_i))=  \omega_2^l(\w_{i=1}^{n-1}y_i,-\sum_{i=1}^{n-1}b_1\otimes\ldots\otimes [b_i,x_1,\ldots,x_{n-1}]_{\g}\otimes\ldots\otimes b_{n-1})$$ $$=  -\sum_{i=1}^{n-1}\omega_2^l(\w_{i=1}^{n-1}y_i,~ b_1\otimes\ldots\otimes [b_i,x_1,\ldots,x_{n-1}]_{\g}\otimes\ldots\otimes b_{n-1})~~~~~~~~~~~~~~~~~~~~$$$$~~~=\sum_{i=1}^{n-1}(\sum^{n-1}_{j=1, j\neq i} b_1\otimes\ldots\otimes[b_j,y_1,\ldots,y_{n-1}]_{\g}\otimes\ldots\otimes [b_i,x_1,\ldots,x_{n-1}]_{\g}\otimes\ldots\otimes b_{n-1})$$ $$ +\sum_{i=1}^{n-1} b_1\otimes\ldots\otimes\big[[
b_i,~x_1,\ldots,x_{n-1}]_{\g},y_1,\ldots,y_{n-1}\big]\otimes\ldots \otimes b_{n-1}.$$
Hence the equality holds.

\end{proof}

\begin{proposition}\label{corL}
Let $\h$ be a Leibniz algebra and let $M$ be a co-representation of $\h.$ Then $M\otimes \h$ is a co-representation over $\h.$ 
\end{proposition}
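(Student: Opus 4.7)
The plan is to equip $M \otimes \h$ with two $\h$-actions extending those on $M$ by the Leibniz rule on each tensor factor:
\[ [x, m \otimes h] := [x, m] \otimes h + m \otimes [x, h], \qquad [m \otimes h, x] := [m, x] \otimes h + m \otimes [h, x], \]
where on the $\h$-factor one uses the regular (adjoint) bimodule structure of $\h$ on itself. These formulas mirror the definition of $\omega_1$ from Section 1: the action ``derives'' over both slots of a simple tensor. Since $M$ is already a co-representation of $\h$, every term on the right is well-defined.

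With these actions in place, I would verify the three co-representation axioms by direct computation. The first axiom splits cleanly: the contribution in the $M$-slot is controlled by the first axiom on $M$, and the contribution in the $\h$-slot is controlled by the Leibniz identity for $\h$. This step is essentially formal. For the second axiom, the expansion of both sides produces extra cross-terms of the form $[y, m] \otimes [h, x]$ and $[m, x] \otimes [y, h]$ that must be accounted for, but they pair up using the second axiom on $M$ together with the Leibniz identity for $\h$.

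The main obstacle will be the third axiom, as both the left and right actions appearing in this identity produce cross-terms and the accounting is the most delicate. My plan is to first isolate the ``diagonal'' contributions $[[m,x], y] \otimes h$ and $m \otimes [[h,x], y]$, verifying them via the third axiom on $M$ and the corresponding form of the Leibniz identity on $\h$, and then show that the remaining mixed terms cancel by simultaneous application of the three co-representation axioms on $M$ and the Leibniz identities on $\h$. This final cancellation is where I expect to spend most of the effort.
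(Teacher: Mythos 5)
Your proposed actions are not the right ones, and the plan breaks down at the very first axiom, not the third. You set $[x,m\otimes h]=[x,m]\otimes h+m\otimes[x,h]$ with $[x,h]$ the bracket of $\h$. Checking the first co-representation axiom, the cross-terms cancel and you are left needing $[[x,y],h]=[x,[y,h]]-[y,[x,h]]$ in the $\h$-slot. This is \emph{not} the Leibniz identity: in the convention of this paper (identity (1.1) with $n=2$), the identity reads $[x,[y,h]]=[[x,y],h]-[[x,h],y]$, so $[x,[y,h]]-[y,[x,h]]=[[x,y],h]-\bigl([[x,h],y]+[y,[x,h]]\bigr)$, and the bracket of a Leibniz algebra is not antisymmetric, so the parenthesized term does not vanish. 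The adjoint action makes $\h$ a \emph{representation} of itself, not a co-representation, which is exactly why the naive ``derive over both tensor slots'' formula — which does work for Lie algebras — fails here. The correct structure (this is what \cite[proposition 3.1]{CJ} provides, and what appears explicitly in Proposition \ref{corh} for $M=\g$, $\h=D_n(\g)$) uses only the \emph{right} multiplication of $\h$ on the second slot, with the signs
$$[x,m\otimes h]=[x,m]\otimes h-m\otimes[h,x],\qquad [m\otimes h,x]=m\otimes[h,x]-[m,x]\otimes h.$$
With these formulas the $\h$-slot contribution to the first axiom becomes $m\otimes\bigl([[h,y],x]-[[h,x],y]\bigr)=-m\otimes[h,[x,y]]$, which is precisely the Leibniz identity, and the remaining axioms go through similarly.

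Separately, note that the paper itself does not verify the axioms here at all: its proof of this proposition is a one-line reduction to \cite[proposition 3.1]{CJ} in the case $n=2$. So even a corrected version of your computation would be a genuinely different (self-contained) route; but as written, the object you propose to verify the axioms for is not a co-representation, so the verification cannot succeed.
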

\begin{proof}
This is \cite[proposition 3.1]{CJ} in the case $n=2.$ 
\end{proof}

The following proposition is the main result of this section. For simplicity, we use  the notation $(\alpha,\beta)$ for $\alpha\otimes\beta.$

\begin{proposition}\label{corh}
Let $\g$ be a Leibniz $n$-algebra. Then $\g\otimes D_n(\g)$ is a co-representation over the Leibniz algebra $D_n(\g)$ with respect to the actions $\bar{\omega}^l: D_n(\g)\otimes(\g\otimes D_n(\g))\rightarrow \g\otimes D_n(\g),$
 defined by $$\bar{\omega}^l(g,(b_0,b))=(\omega_0^l(g,b_0), b)-(b_0,\omega_1(b,g)),~~b_0\in\g,~b,g\in D_n(\g)$$ and $\bar{\omega}^r: (\g\otimes D_n(\g))\otimes D_n(\g)\rightarrow \g\otimes D_n(\g),$
 defined by $$\bar{\omega}^r((b_0,b),g)=(b_0,\omega_1(b,g))-(\omega_0^r(b_0,g), b),~~b_0\in\g,~b,g\in D_n(\g).$$ Moreover, this action is compatible with $d$ (the Loday boundary map of the complex $_nCL_*(\g)$) and the induced action on $_{n}HL_*(\g;~\kl)$ is trivial. 
\end{proposition}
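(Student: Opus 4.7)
The plan is to reduce the co-representation assertion to Propositions~\ref{corg} and~\ref{corL}, and then to establish the chain-map compatibility and the homological triviality by an explicit null-homotopy argument.

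First I would observe that Proposition~\ref{corg} equips $\g$ with a co-representation structure over $D_n(\g)$ given by $\omega_0^l$ and $\omega_0^r$. Proposition~\ref{corL}, applied with $\h = D_n(\g)$ and $M = \g$, then endows $M \otimes \h = \g \otimes D_n(\g)$ with a co-representation structure over $D_n(\g)$. Unpacking the construction from \cite[proposition 3.1]{CJ} specialized to $n = 2$ yields precisely the operators $\bar{\omega}^l$ and $\bar{\omega}^r$ displayed in the statement, so no separate verification of the three co-representation axioms is needed.

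Second, I would extend the action to every $\g \otimes D_n(\g)^{\otimes k}$ by the natural adjoint-type formula in which $g$ acts by $\omega_0^l(g, -)$ on the $\g$-factor and by $-\omega_1(-, g)$ on each $D_n(\g)$-factor. Compatibility of this extended action with the Loday boundary $d$ of ${}_nCL_*(\g)$ is a term-by-term verification that reduces to the co-representation axioms of $\g$ over $D_n(\g)$ from Proposition~\ref{corg} together with the Leibniz identity for $\omega_1$; this is the familiar fact that inner actions on the Loday complex are chain maps.

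Finally, to establish triviality of the induced action on ${}_nHL_*(\g;\kl)$, I would construct an explicit contracting homotopy $s_g$ depending on $g \in D_n(\g)$, modeled on the standard null-homotopy by which a Leibniz algebra acts trivially on its own Leibniz homology; concretely, $s_g$ inserts $g$ into the tensor word of $D_n(\g)$-factors, up to signs, and a careful expansion of $d \circ s_g + s_g \circ d$ should reproduce $\bar{\omega}^l(g, -)$. The hard part will be this last computation: the Loday differential carries quadratically many bracket contributions, and verifying that $ds_g + s_g d$ collapses to $\bar{\omega}^l(g, -)$ demands careful sign bookkeeping and pairwise cancellation of terms arising from the non-antisymmetry of the Leibniz bracket. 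Nevertheless, the argument runs in parallel to the classical proof for ordinary Leibniz homology and goes through once the co-representation structure of $\g$ over $D_n(\g)$ is in place.
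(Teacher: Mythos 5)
Your proposal matches the paper's proof in all essentials: the co-representation structure is obtained exactly as you say by combining Propositions~\ref{corg} and~\ref{corL} (citing \cite{CJ}), compatibility with $d$ is checked by direct computation (the paper organizes it as an induction on $k$ using the recursion $d(\beta,b_{k+1})=(d\beta,b_{k+1})+(-1)^{k+1}\bar{\omega}^l_k(b_{k+1},\beta)$), and triviality on homology comes from precisely the null-homotopy you describe, namely appending $g$ to the tensor word so that $\bar{\omega}^l_k(g,\beta)=(-1)^{k+1}[d(\beta,g)-(d\beta,g)]$ is a boundary on cycles. The only difference is that the final step is less laborious than you anticipate, since that recursion identity, already needed for the compatibility argument, yields the homotopy formula immediately.
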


\begin{proof}
By proposition \ref{corg}, $\g$ is a co-representation over $D_n(\g).$ Now take $\h=D_n(\g)$ and $M=\g$ in proposition \ref{corL} and get the actions from the proof of \cite[proposition 3.1]{CJ}. For the compatibility of these actions with $d,$ we provide a proof for the left action $\bar{\omega}_l.$ The proof for $\bar{\omega}_r$ is similar. Let $g=(a_1,a_2,\ldots,a_n)\in D_n(\g)$ and $\alpha=(b_0,b_1,\ldots,b_k)\in\g\otimes D_n(\g)^{\otimes k}$ with $b_i=(x_1^i,\ldots, x_{n-1}^i).$ We claim that  $$d(\bar{\omega}^l_k(g,\alpha))=\bar{\omega}_{k-1}^l(g,d(\alpha)),$$ where $\bar{\omega}^l_k$ is the linear extension of the action $\bar{\omega}^l$ on $\g\otimes\D_n(\g)^{\otimes k}.$  We proceed by induction on $k.$ Indeed, for $k=0$ the result is clear as $$d(\bar{\omega}_0(g,\alpha))=d(\omega_0^l(g,b_0))=d(-[b_0,a_1,\ldots,a_{n-1}]_{\g})=0=\bar{\omega}^l_0(g,0)=\bar{\omega}_0^l(g,d(\alpha)).$$ 
For $k=1,$ we have on one hand 
$$
\begin{aligned}
\bar{\omega}_1^l(g,d(\alpha))&=\bar{\omega}_1^l(g,d(b_0\otimes b_1))\\&=\bar{\omega}_1^l(g,-[b_0,x_1^1,\ldots,x_{n-1}^1]_{\g})\\&=\bar{\omega}_0^l(g,-[b_0,x^1_1,\ldots,x_{n-1}^1]_{\g})\\&=\big[[b_0,x_1^1,\ldots,x_{n-1}^1]_{\g},a_1,\ldots,a_{n-1}\big]_{\g}.
\end{aligned}
$$
On the other hand,
$$
\begin{aligned}
d(\bar{\omega}_1^l(g,\alpha))&=d\big((\omega_0^l(g,b_0), b_1)-(b_0,\omega_1(b_1,g))\big)
\\&=d\big([b_0,a_1,\ldots,a_{n-1}]_{\g}, b_1\big)-\sum_{i=1}^{n-1}d\big(b_0\otimes x_1^1\otimes\ldots\otimes[x_i^1,a_1,\ldots,a_{n-1}]_{\g}\otimes\ldots\otimes x_{n-1}^1)\\&
=\big[[b_0,a_1,\ldots,a_{n-1}]_{\g},x_1^1,\ldots,x_{n-1}^1\big]_{\g}+\sum_{i=1}^{n-1}\big[b_0, x_1^1,\ldots,[x_i^1,a_1,\ldots,a_{n-1}]_{\g},\ldots, x_{n-1}^1\big]_{\g}.
\end{aligned}
$$ Therefore the result holds by the identity (1.1) above.
Now assume that the result holds for $k,$ and write $\alpha=(\beta,b_{k+1})$ for $\alpha=(b_0,b_1,\ldots,b_{k+1}).$ Then we have the following by definition of the Loday boundary map $d$ : $$d(\alpha)=d(\beta,b_{k+1})=(d(\beta),b_{k+1})+(-1)^{k+1}\bar{\omega}_k^l(b_{k+1},\beta)~~~~~~~~\textbf{(2.1.0)}.$$  So we have  
$$
\begin{aligned}
d(\bar{\omega}_{k+1}^l(g,\alpha))&=d\big(\bar{\omega}_{k+1}^l(g,(\beta,b_{k+1}))\big)\\&=d\big[(\bar{\omega}_k^l(g,\beta),b_{k+1})-\big(\beta,\omega_1(b_{k+1},g)\big)\big]~~\mbox{by definition of}~\bar{\omega}_{k+1}^l\\&
=\big(d(\bar{\omega}^l_k(g,\beta)),b_{k+1}\big)+(-1)^{k+1}\omega^l_k(b_{k+1},\bar{\omega}^l_k(g,\beta))-\big(d(\beta),\omega_1(b_{k+1},g)\big)\\&~~~~-(-1)^{k+1}\bar{\omega}^l_k\big(\omega_1(b_{k+1},g),\beta\big)~~~\mbox{by}~\textbf{(2.1.0)}\\&=\big(\bar{\omega}_{k-1}^l(g,d(\beta)),b_{k+1}\big)-\big(d(\beta),\omega_1(b_{k+1},g)\big)\\&~~~~-(-1)^{k+1}\big[\bar{\omega}^l_k(\omega_1(b_{k+1},g),\beta) -\omega_k^l(b_{k+1},\bar{\omega}_k^l(g,\beta))\big]~~\mbox{by inductive hypothesis}\\&
=\bar{\omega}^l_k\big(g,(d(\beta),b_{k+1})\big)+(-1)^{k+1}\omega_k^l\big(g,\bar{\omega}^l_k(b_{k+1},\beta)\big)~~\mbox{by}~\textbf{(1.1)}\\&
=\bar{\omega}_{k}^l(g,d(\beta,b_{k+1})\big)\\&=\bar{\omega}^l_{k}(g,d(\alpha)) ~~\mbox{by}~\textbf{(2.1.0)}.
\end{aligned}
$$
Now we prove that the induced action on $_{n}HL_*(\g;~\kl)$ is trivial. Indeed, it is clear from \textbf{(2.1.0)} that for a fix $g\in D_n(\g),$ the endomorphisms $\bar{\omega}_k^l(g,-)$ and $d(-,g)-(d(-),g)$ of $\g\otimes D_n(\g)$ satisfy the identity $$\bar{\omega}^l_k(g,-)=(-1)^{k+1}[d(-,g)-(d(-),g)].$$ Therefore if $\beta$ is a cycle in $_{n}CL_k(\g),$ we have $$\bar{\omega}^l_k(g,\beta)=(-1)^{k+1}[d(\beta,g)-(d(\beta),g)]=(-1)^{k+1}d(\beta,g)$$ which is zero in homology.

\end{proof}

\section{A spectral sequence for Filippov algebras}
In this section, $\g$ is a $n$-Filippov algebra ($n>2$) of dimension at least $n$ over the field $\kl.$
The canonical projection $$\pi_1^k: \g\otimes D_n(\g)^{\otimes (k+1)}\longrightarrow \g\otimes L_n(\g)^{\otimes (k+1)},~k\geq 0$$ induces a chain map, $_{n}CL_*(\g)\longrightarrow  ~_nC_*(\g),$ and thus  a $\kl -$linear map on homology 
$$_{n}HL_*(\g;\kl)\longrightarrow~ _{n}H^{Lie}_*(\g;~\kl).$$

Now let $$_{n}C_k^{rel}(\g):=ker(\pi_1^k).$$ Clearly $_{n}C_*^{rel}(\g)$ is a subcomplex of $_{n}CL_*(\g).$ We define  the relative theory $_{n}H^{rel}_*(\g)$ as the homology of  $_{n}C_*^{rel}(\g).$ It results the following long exact sequence relating the homologies of $n$-Lie algebras and Leibniz $n$-algebras:
\begin{proposition}
There is a long exact sequence\\ $\ldots\stackrel{\p}{\longrightarrow}~_{n}H^{rel}_{k-1}(\g)\longrightarrow~ _{n}HL_{k}(\g;~\kl)\longrightarrow~_{n}H^{Lie}_{k}(\g;~\kl)\stackrel{\p}{\longrightarrow} ~_{n}H^{rel}_{k-2}(\g)\longrightarrow$\\ $~~~~~~~~~~~~~~~~~~~~\vdots~~~~~~~~~~~~~~~~\vdots~~~~~~~~~~~~~~~~\vdots$\\ $\ldots\stackrel{\p}{\longrightarrow}~_{n}H^{rel}_{0}(\g)\longrightarrow ~_{n}HL_{1}(\g;~\kl)\longrightarrow~_{n}H^{Lie}_{1}(\g;~\kl)\stackrel{\p}{\longrightarrow} 0$\\ $~ ~~~~~~~~~~~~~~~~~0\longrightarrow~_{n}HL_{0}(\g;~\kl)\stackrel{\cong}{\longrightarrow} ~_{n}H^{Lie}_{0}(\g;~\kl)\stackrel{\p}{\longrightarrow} 0.$
\end{proposition}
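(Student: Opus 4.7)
The plan is to produce this sequence as the long exact sequence in homology attached to a short exact sequence of chain complexes. First I would verify that $\pi_1\colon {}_{n}CL_*(\g) \to {}_{n}C_*(\g)$ is a surjective chain map. Surjectivity is immediate: the canonical quotient $D_n(\g) = \g^{\otimes n-1} \twoheadrightarrow L_n(\g) = \g^{\w n-1}$ is surjective, and $\pi_1$ is built from it by tensoring. For chain-compatibility, the key observation is that since $\g$ is $n$-Filippov, both the action $[b_0,a_1,\ldots,a_{n-1}]_{\g}$ and the bracket $\omega_1$ on $D_n(\g)$ vanish on any tensor having a repeated entry in the last $n-1$ slots; hence every term of the Loday boundary $d$ descends along the wedge-quotient, giving $d_{{}_{n}C}\circ \pi_1 = \pi_1\circ d_{{}_{n}CL}$.

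With this in hand, one has the short exact sequence of complexes
\[
0 \longrightarrow {}_{n}C_*^{rel}(\g) \longrightarrow {}_{n}CL_{*+1}(\g) \xrightarrow{\pi_1} {}_{n}C_{*+1}(\g) \longrightarrow 0,
\]
the shift by one reflecting the convention ${}_{n}C_k^{rel}(\g) = \ker(\pi_1^k) \subseteq {}_{n}CL_{k+1}(\g)$. Applying the zig-zag lemma then yields, for each $k\geq 1$, the exact segment
\[
\cdots \to {}_{n}H^{rel}_{k-1}(\g) \to {}_{n}HL_k(\g;\kl) \to {}_{n}H^{Lie}_k(\g;\kl) \xrightarrow{\p} {}_{n}H^{rel}_{k-2}(\g) \to \cdots,
\]
which is precisely the sequence in the statement.

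The tail will need separate inspection. Because $\pi_1^k$ is only defined for $k\geq 0$, there is no relative piece below degree $0$, so the connecting map out of ${}_{n}H^{Lie}_1$ terminates at $0$. In degree $0$, one has ${}_{n}CL_0(\g)=\g={}_{n}C_0(\g)$ with $\pi_1$ the identity, and the images of the two $d_1$ maps agree as subspaces of $\g$ (again by Filippov antisymmetry), yielding the isomorphism ${}_{n}HL_0(\g;\kl)\xrightarrow{\cong}{}_{n}H^{Lie}_0(\g;\kl)$. I expect the only genuine obstacle to be keeping the degree shift straight between ${}_{n}C_*^{rel}(\g)$ and the ambient complex; the rest is the standard snake-lemma machinery together with the Filippov antisymmetry identity.
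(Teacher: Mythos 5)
Your proposal is correct and follows exactly the paper's route: the paper's entire proof is the one-line observation that $0\to {}_{n}C^{rel}_*(\g)\to {}_{n}CL_*(\g)\to {}_{n}C_*(\g)\to 0$ is a short exact sequence of chain complexes, whose long exact homology sequence (with the degree shift coming from ${}_{n}C^{rel}_k(\g)=\ker\pi_1^k\subseteq {}_{n}CL_{k+1}(\g)$) is the stated sequence. You supply details the paper leaves implicit --- that $\pi_1$ is a surjective chain map via Filippov antisymmetry, and the bookkeeping of the shift that produces the $k-1$, $k-2$ indices and the degenerate tail --- but the argument is the same.
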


\begin{proof}
$$0\longrightarrow ~_{n}C^{rel}_*(\g)\longrightarrow ~_{n}CL_*(\g)\longrightarrow ~_nC_*(\g)\longrightarrow 0,$$ is a short exact sequence of chain complexes.
\end{proof}

Also, the canonical projection $$\pi_2^k: D_n(\g)\otimes L_n(\g)^{\otimes k}\longrightarrow  L_n(\g)^{\otimes k+1},~k\geq 0$$ induces a chain map, $CL_*(L_n(\g),D_n(\g))\longrightarrow CL_*(L_n(\g),\kl),$ and thus a $\kl-$linear map on homology 
$$HL_*(L_n(\g),D_n(\g))\longrightarrow~ HL_*(L_n(\g),~\kl)$$ (the co-representation $D_n(\g)$ over $L_n(\g)$ is given in proposition \ref{corep}).\\
Now let $$_{n}DR_k(\g):=ker(\pi_2^{k+1}).$$ Then $_{n}DR_*(\g)$ is a subcomplex of $CL_*(L_n(\g),D_n(\g)).$ We define  the relative theory $_{n}HD_*(\g)$ as the homology of that complex. It results the following :

\begin{proposition}
There is a long  exact sequence\\ $\ldots\stackrel{\p}{\longrightarrow}~_{n}HD_{k-1}(\g)\longrightarrow~ HL_{k}(L_n(\g);~D_n(\g))\longrightarrow~HL_{k+1}(L_n(\g);~\kl)\stackrel{\p}{\longrightarrow} ~_{n}HD_{k-2}(\g)\longrightarrow$\\ $~~~~~~~~~~~~~~~~~~~~\vdots~~~~~~~~~~~~~~~~\vdots~~~~~~~~~~~~~~~~\vdots$\\ $\ldots\stackrel{\p}{\longrightarrow}~_{n}HD_{0}(\g)\longrightarrow ~HL_{1}(L_n(\g);~D_n(\g))\longrightarrow~HL_{2}(L_n(\g);~\kl)\stackrel{\p}{\longrightarrow} 0$\\ $~ ~~~~~~~~~~~~~~~~~0\longrightarrow~HL_{0}(L_n(\g);~D_n(\g))\stackrel{\cong}{\longrightarrow} ~HL_{1}(L_n(\g);~\kl)\stackrel{\p}{\longrightarrow} 0.$
\end{proposition}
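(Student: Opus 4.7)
The plan is to deduce the long exact sequence as the one attached to a short exact sequence of chain complexes, in direct parallel with the preceding proposition. Concretely, I claim that the canonical projections $\pi_2^{k+1}$ assemble into a short exact sequence of chain complexes
$$0\longrightarrow {}_{n}DR_*(\g)\longrightarrow CL_{*+1}(L_n(\g),D_n(\g))\stackrel{\pi_2}{\longrightarrow} CL_{*+2}(L_n(\g),\kl)\longrightarrow 0,$$
and the desired long exact sequence in homology is the associated one. The degree shift in the labels ${}_nHD_{k-1}(\g)$, $HL_k(L_n(\g);D_n(\g))$, $HL_{k+1}(L_n(\g);\kl)$ is precisely the shift by one built into ${}_nDR_k(\g)=\ker(\pi_2^{k+1})$.

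Two things require verification. The first is that $\pi_2^{k+1}$ is surjective in each degree; this is immediate from the surjectivity of the antisymmetrization $\g^{\otimes n-1}\twoheadrightarrow \g^{\w n-1}$ in the leftmost tensor factor, combined with the identity on the remaining factors. The second is that $\pi_2$ is a chain map with respect to the shifted indexing on the right-hand complex. Here the key observation is that the co-representation $D_n(\g)$ over $L_n(\g)$ produced in Proposition \ref{corep} has trivial right action $\omega_2^r$, so the ``coefficient'' sum in the Loday boundary on $CL_*(L_n(\g),D_n(\g))$ vanishes identically. On the trivial-coefficient complex, after absorbing the $D_n(\g)$-factor into an extra $L_n(\g)$-factor via antisymmetrization, the corresponding terms of the Loday boundary instead read $\bar{\omega}_0(\pi(v),h_j)$ in $L_n(\g)$; by the explicit formulas for $\bar{\omega}_0$ and $\omega_2^l$ one obtains the identity
$$\bar{\omega}_0(\pi(v),h_j)=-\pi\!\left(\omega_2^l(h_j,v)\right),$$
so these terms are ``phantom'' contributions that, together with the global sign from the one-degree shift, are exactly matched across $\pi_2$. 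The bracket terms among the $h_i$'s transport trivially under $\pi_2$ with the appropriate sign shift.

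Once the chain-map property is in hand, ${}_nDR_*(\g)=\ker\pi_2$ is automatically a subcomplex, the short exact sequence above is genuine, and its associated long exact sequence in homology is the one in the statement; the bottom isomorphism and the rightmost zero come from $HL_{-1}(L_n(\g);\kl)=0={}_nHD_{-1}(\g)$.

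The main obstacle is the sign and degree-shift bookkeeping in the chain-map verification: one must show carefully that, after identifying $CL_{k+1}(L_n(\g),D_n(\g))$ with $CL_{k+2}(L_n(\g),\kl)$ modulo $\ker\pi_2$, the two Loday boundaries agree up to the sign introduced by the shift. Once this is settled, the rest of the proof is purely formal and exactly mirrors the argument of the preceding proposition.
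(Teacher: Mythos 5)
Your proposal takes exactly the paper's route: the paper's entire proof is the one-line observation that $0\longrightarrow{}_{n}DR_*(\g)\longrightarrow CL_*(L_n(\g),D_n(\g))\stackrel{\pi_2}{\longrightarrow} CL_{*+1}(L_n(\g),\kl)\longrightarrow 0$ is a short exact sequence of chain complexes, whose associated long exact sequence (with the degree shift already built into ${}_{n}DR_k(\g)=\ker(\pi_2^{k+1})$) is the one stated. Your extra verifications --- surjectivity of the antisymmetrization in the coefficient slot, and the chain-map bookkeeping via the identity $\bar{\omega}_0(\pi(v),h_j)=-\pi\big(\omega_2^l(h_j,v)\big)$ together with the vanishing of the right action $\omega_2^r$ --- are precisely the details the paper leaves implicit, so the two arguments coincide.
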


\begin{proof}
$$0\longrightarrow ~ _{n}DR_*(\g)\longrightarrow CL_*(L_n(\g),D_n(\g))\longrightarrow CL_*(L_n(\g),\kl)\longrightarrow 0$$ is a short exact sequence of chain complexes.
\end{proof}

\begin{remark}
 $\g\otimes D_n(\g)$ is a co-representation over the Leibniz algebra $L_n(\g)$ (the proof is similar to proposition \ref{corh}) and the canonical projection $$\pi_3^k:\g\otimes D_n(\g)\otimes L_n(\g)^{\otimes k}\longrightarrow \g\otimes L_n(\g)^{\otimes k+1}$$ induces a chain map, $CL_*(L_n(\g),~\g\otimes D_n(\g))\longrightarrow CL_*(L_n(\g),~\g),$ and thus a $\kl-$linear map on homology $$_{n}H^{Lie}_{*}(\g;~ D_n(\g))\longrightarrow _{n}H_{*+1}^{Lie}(\g;~\kl).$$ 
 Set $\Gamma_{\g}=ker[D_n(\g)\longrightarrow L_n(\g)].$ Note that $\g$ is a co-representation over $\Gamma_{\g}$ by means of the restrictions of the actions $\omega_0^l$ and $\omega_0^r,$ and $\Gamma_{\g}$ is a co-representation over  $L_n(\g)$  by means of the restrictions of the actions $\omega_2^l$ and $\omega_2^r.$ Also, $~ker[\pi_3^{*}]=\g\otimes \Gamma_{\g}\otimes L_n(\g)^{\otimes *}$ is a subcomplex of $CL_*(L_n(\g),~\g\otimes D_n(\g)).$

The projection $\pi^k_1$ can be written as the composition of projections $$\g\otimes D_n(\g)^{\otimes k+1}\longrightarrow \g\otimes D_n(\g)\otimes L_n(\g)^{\otimes k}\longrightarrow \g\otimes L_n(\g)^{\otimes k+1}$$ which leads to a natural map between exact sequences

\[ 
\begin{diagram} 
  \node{_{n}H^{rel}_{k-1}(\g)} \arrow{e,t}{} 
  \node{_{n}HL_{k}(\g;~\kl)} \arrow{e,t}{} 
  \node{_{n}H^{Lie}_{k}(\g;~\kl)} \arrow{e,t}{}  
  \node{_{n}H^{rel}_{k-2}(\g)} 
  \node{}\\ 
  \node{_{n}H_{k}^{Lie}(\g;~\Gamma_{\g})} \arrow{e,t}{} \arrow{n,r}{}
  \node{_{n}H^{Lie}_{k}(\g;~D_n(\g))} \arrow{e,t}{} \arrow{n,r}{}
  \node{_{n}H_{k+1}^{Lie}(\g;~\kl)} \arrow{e,t}{} \arrow{n,r}{}
  \node{_{n}H^{Lie}_{k+1}(\g;~\Gamma_{\g})}                \arrow{n,r}{}
  \node{} \\
  \node{}
\end{diagram}
\] 
where $_{n}H^{Lie}_{*}(\g,~\Gamma_{\g})$ stands for the homology of the complex $ker[\pi_3^{*}].$

\end{remark}
The following theorem is the main result of this paper.
\begin{theorem}
Let $\g$ be a $n$-Filippov algebra (with $n>2$) of dimension at least  $n$. Then there exist a spectral sequence converging to $_{n}H^{rel}_*(\g)$ with $$E^2_{r,s}\cong~ _{n}HL_s(\g;~\kl)\otimes ~_{n}HR_r(\g),~~r\geq0,~s\geq0.$$
\end{theorem}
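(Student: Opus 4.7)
The plan is to equip the relative complex $_{n}C^{rel}_*(\g)$ with a decreasing filtration whose associated spectral sequence converges to $_{n}H^{rel}_*(\g)$ and whose $E^2$-page splits as a Künneth-type tensor product. The starting point is the $\kl$-linear splitting $D_n(\g)=\Gamma_\g\oplus L_n(\g)$, which exists because $\kl$ is a field. Since $\Gamma_\g$ is the kernel of a surjection of Leibniz algebras, it is a two-sided ideal of $D_n(\g)$ with respect to the bracket $\omega_1$. Expanding tensor products yields the decomposition
$$~_{n}C^{rel}_k(\g)=\bigoplus_{\emptyset\neq S\subseteq\{1,\ldots,k+1\}}\g\otimes M_1^S\otimes\cdots\otimes M_{k+1}^S,$$
with $M_i^S=\Gamma_\g$ when $i\in S$ and $M_i^S=L_n(\g)$ otherwise.

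I would then define $F_p~_{n}C^{rel}_k(\g)$ as the span of those summands whose last $p$ tensor positions are forced to lie in $L_n(\g)$. Verifying that the Loday boundary $d$ preserves this filtration is the key technical step: the $[v,h_j]$-terms merely shorten the tensor in a position-preserving way, while the bracket-terms $[h_i,h_j]$ either stay within $L_n(\g)$-positions or produce an element of $\Gamma_\g$ at one of the inner positions, by the ideal property. In either case, the last $p$ positions remain in $L_n(\g)$. On the $E^0$-page the associated graded splits, schematically, as
$$E^0_{r,s}\cong(~_{n}CL_*(\g))_s\otimes C_r,$$
where $C_*$ is the complex on the ``outer'' $L_n(\g)$-tail together with the final $\Gamma_\g$-factor, equipped with the Leibniz differential induced by the anti-symmetric co-representation of Proposition \ref{corep}.

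Running the spectral sequence, $d_0$ is the Loday boundary of $_{n}CL_*(\g)$, so $E^1_{r,s}\cong~_{n}HL_s(\g;\kl)\otimes C_r$, and $d_1$ is the Loday boundary of $C_*$. Since $\kl$ is a field, the Künneth theorem yields
$$E^2_{r,s}\cong~_{n}HL_s(\g;\kl)\otimes~_{n}HR_r(\g),$$
where $_{n}HR_r(\g):=H_r(C_*)$. Convergence to $_{n}H^{rel}_*(\g)$ is automatic because the filtration is bounded in each total degree.

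The main obstacle is verifying that the chosen filtration is strictly compatible with the Loday boundary $d$ and that the $E^0$-differential cleanly splits as a tensor product of two complexes. Even though $\Gamma_\g$ is an ideal of $D_n(\g)$, the complementary subspace $L_n(\g)\subset D_n(\g)$ is in general not closed under $\omega_1$, so a bracket of two $L_n(\g)$-positions may develop a $\Gamma_\g$-component. The correct filtration must absorb such ``leakage'' into deeper strata. The detailed verification that $d$ respects the filtration --- together with the identification of $C_*$ as the chain complex whose homology is the intended $_{n}HR_r(\g)$ appearing in the statement --- constitutes the bulk of the technical work.
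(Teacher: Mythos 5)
Your overall strategy --- filter $_{n}C^{rel}_*(\g)$ by the positions of the $\Gamma_{\g}$-factors so that the associated graded splits as $_{n}CL_s(\g)$ tensored with a tail complex, then take homology in each direction --- is the strategy of the paper. But the filtration you actually propose does not work, and your final paragraph concedes the problem without repairing it. If you fix a linear complement $L_n(\g)\subset D_n(\g)$ and require the last $p$ tensor positions to lie in that complement, then a Loday term $(-1)^{j+1}(\ldots\otimes[h_i,h_j]\otimes\ldots\widehat{h_j}\ldots)$ with both $i$ and $j$ among the last $p$ positions produces $\omega_1(h_i,h_j)$, which can have a nonzero $\Gamma_{\g}$-component arbitrarily close to the right end; the filtration index can then drop by an unbounded amount, $d$ does not respect the filtration, and there is no spectral sequence to run. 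The paper puts the constraint on the opposite block and avoids complements entirely: $F^r_s=\g\otimes D_n(\g)^{\otimes s}\otimes \ker[D_n(\g)^{\otimes r+1}\to L_n(\g)^{\otimes r+1}]$, so the first $s$ positions are unconstrained and the last $r+1$ positions are only required to contain a $\Gamma_{\g}$-factor somewhere. Compatibility with $d$ then follows from the vanishing statements $\omega_0^r(a,v)=0$ and $\omega_1(b,v)=0$ for $v\in\Gamma_{\g}$ (consequences of the Filippov antisymmetry): any Loday term that deletes the $\Gamma_{\g}$-witness brackets it on the right of something and hence vanishes, while every other term keeps a witness inside the tail block. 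This is precisely the verification you label as ``the bulk of the technical work'' and leave undone.

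There is a second gap at the $E^2$-page. Even once $E^1_{r,s}\cong{_{n}HL_s(\g;~\kl)}\otimes{_{n}DR_r(\g)}$ is established, the differential $d_1$ is not simply the identity tensored with the differential of the tail complex: the Loday boundary contains the terms $(-1)^{j+s}(\omega_0^r(a,e_j),u,v,\mu_j)$ and $(-1)^{j+s}(a,W_1(u,e_j),v,\mu_j)$, in which the tail acts on the $_{n}CL_s(\g)$-factor, and on $E^1$ these become the induced action of $D_n(\g)$ on $_{n}HL_s(\g;~\kl)$. Your appeal to the K\"unneth theorem silently assumes these cross-terms vanish. The paper disposes of them by invoking Proposition \ref{corh}, which shows the induced action on $_{n}HL_*(\g;~\kl)$ is trivial; without citing or reproving that fact, the identification of $E^2_{r,s}$ with the stated tensor product is unjustified.
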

\begin{proof}
 We consider the filtration of the complex $$_{n}C_k^{rel}(\g)=ker[\g\otimes D_n(\g)^{\otimes  k+1}\longrightarrow \g\otimes L_n(\g)^{\otimes k+1}],~k\geq 0$$ given by $$F^r_s:=\g\otimes D_n(\g)^{\otimes s}\otimes ker[D(\g)^{\otimes r+1}\longrightarrow L_n(\g)^{\otimes r+1}]~~r\geq0,~s\geq0.$$ Let $a\in\g,$ $u=(b_1,\ldots,b_{s})\in D_n(\g)^{\otimes s}$ and $v=(x_1,\ldots,x,\ldots,x,\ldots,x_{n-1}) \in \Gamma_g.$ For simplicity, we use again the notation $(\alpha,\beta)$ for $\alpha\otimes\beta.$ Denote by $W_1$ the extension of the action $\omega_1$ on $D_n(\g)^{\otimes s}.$ Then as $\omega_0^r(a,v)=0$ and $$W_1(u,v)=\sum_{i=1}^{s} (b_1,b_2,\ldots,\omega_1(b_i,v),\ldots,b_s)=0, $$ it follows that $d(a,u,v)=(d(a,u),v).$ So $F_*^0=\g\otimes D_n(\g)^{\otimes *}\otimes \Gamma_{\g}$ is a subcomplex of $_{n}C^{rel}_*(\g),$ and we have  $$H_*(F_*^0)\cong ~_{n}HL_*(\g;~\kl)\otimes\Gamma_{\g}.$$ 
Now keep $a,$ $u$ and $v$ as above and let $\mu=(e_1,\ldots,e_i),$ $\mu_i=(e_1,\ldots,\hat{e}_j,\ldots,e_i),~1\leq j\leq i$ with $e_i \in D_n(\g).$ We have \\

$d(a,u,v,\mu)=(d(a,u),v,\mu)+(-1)^s(a,u,v,d\mu)$\\

$~~~~~~~~~~~~~~~~~+\sum_{j=1}^i(-1)^{j+s}(\omega_0^r(a,e_j),u,v,\mu_j)+\sum_{j=1}^{i}(-1)^{j+s}(a,W_1(u,e_j),v,\mu_j).$\\

Note that in the above calculations, we used the fact that $$W_1(u,e_j)=\sum_{t=1}^q(b_1,\ldots,b_{t-1},\omega_1(b_t,e_j),\dots,b_{q})$$ and $\omega_0^l(v,y)=0=\omega_0^r(y,v)$ for all $y\in\g$ and for all $v\in \Gamma_{\g}.$  %also, we use the same notation $[-,-]$ for the bracket in $D_n(\g)$ and for the corepresentation $\g$ over $D_n(\g).$

It is now clear that $F^0_*\subset F^1_*\subset F^2_*\subset\ldots$ are subcomplexes of $_{n}C^{rel}_*(\g)$ with $F_0^k=~_{n}C^{rel}_k(\g).$

Now   $$E^0_{r,s}=\frac{F^r_s}{F^{r-1}_{s+1}}=\frac{\g\otimes D_n(\g)^{\otimes s}\otimes ker[D(\g)^{\otimes r+1}\longrightarrow L_n(\g)^{\otimes r+1}]}{\g\otimes D_n(\g)^{\otimes {s+1}}\otimes ker[D(\g)^{\otimes r}\longrightarrow L_n(\g)^{\otimes r}]}$$

$$\cong\g\otimes D_n(\g)^{\otimes s}\otimes\frac{ker[D(\g)^{\otimes r+1}\longrightarrow L_n(\g)^{\otimes r+1}]}{D_n(\g)\otimes ker[D(\g)^{\otimes r}\longrightarrow L_n(\g)^{\otimes r}]}$$

$$\cong\g\otimes D_n(\g)^{\otimes s}\otimes ker[D_n(\g)\otimes L_n(\g)^{\otimes r}\longrightarrow L_n(\g)^{\otimes r+1}]$$\\

$~~~~~~~~~~~~~~~~~~~~\cong ~_{n}CL_s(\g,\kl)\otimes~_{n}DR_r(\g).$\\

Thus the corresponding $E^1_{*,*}$ of the spectral sequence has the form $$E^1_{r,s}=H_s(E^0_{r,*})\cong ~_{n}HL_s(\g;~\kl)\otimes ~_{n}DR_r(\g).$$ 

Finally, an examination of the Loday boundary map $d$ above together with the fact that the actions of $D_n(\g)$ and $L_n(\g)$ on $_{n}HL_s(\g;~\kl)$ are trivial by proposition \ref{corh}, lead to 

$$E^2_{r,s}=H_r(E^1_{*,s})\cong ~_{n}HL_s(\g;~\kl)\otimes~_{n}HD_r(\g).$$
\end{proof}

\begin{remark}
For a Leibniz algebra $\g,$ the complexes $_{2}DR_*(\g)$ and $_{2}C^{rel}_*(\g)$ are trivial. So the spectral sequence above does not generalize the Pirashvili spectral sequence \cite{TP}.
\end{remark}

\begin{corollary}
If $\g$ is a $n$-Filippov algebra with $_{n}H_*^{Lie}(\g;~\kl)=0,$ then $_{n}HL_*(\g;~\kl)=0.$
\end{corollary}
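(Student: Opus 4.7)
The plan is to combine the long exact sequence of the first Proposition of Section~3 with the spectral sequence of the preceding Theorem through a strong induction on the homological degree. Under the hypothesis $_{n}H^{Lie}_{*}(\g;\kl)=0$, every third term of the long exact sequence vanishes, so it breaks into short exact pieces yielding $_{n}HL_{0}(\g;\kl)\cong {}_{n}H^{Lie}_{0}(\g;\kl)=0$ together with isomorphisms $_{n}HL_{k}(\g;\kl)\cong {}_{n}H^{rel}_{k-1}(\g)$ for every $k\geq 1$. Thus the corollary is reduced to showing $_{n}H^{rel}_{k-1}(\g)=0$ for every $k\geq 1$.

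I would then proceed by strong induction on $k$, with the base case $k=0$ already handled. For the inductive step, assume $_{n}HL_{j}(\g;\kl)=0$ for all $j<k$; it suffices to show $_{n}H^{rel}_{k-1}(\g)=0$. Apply the spectral sequence with $E^2_{r,s}\cong {}_{n}HL_{s}(\g;\kl)\otimes {}_{n}HD_{r}(\g)$ converging to $_{n}H^{rel}_{r+s}(\g)$. The groups contributing to $_{n}H^{rel}_{k-1}(\g)$ lie on the diagonal $r+s=k-1$, on which $0\leq s\leq k-1<k$. By the inductive hypothesis $_{n}HL_{s}(\g;\kl)=0$ for every such $s$, hence $E^2_{r,s}=0$ throughout that diagonal; all subsequent pages remain zero on the same diagonal, so the associated graded of $_{n}H^{rel}_{k-1}(\g)$ vanishes and therefore $_{n}H^{rel}_{k-1}(\g)=0$. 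Combined with the isomorphism from the long exact sequence, this gives $_{n}HL_{k}(\g;\kl)=0$, completing the induction.

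The argument is essentially formal once the long exact sequence and the spectral sequence are in hand, so there is no substantive technical obstacle. The only point that warrants genuine care is the degree bookkeeping, specifically the verification that the diagonal $r+s=k-1$ on the $E^2$-page really does exhaust the associated graded of $_{n}H^{rel}_{k-1}(\g)$; this is immediate from the inclusion $F^r_s\subseteq {}_{n}C^{rel}_{r+s}(\g)$ set up in the proof of the Theorem. Since the filtration in each chain degree is finite (it stabilizes at $F^k_0={}_{n}C^{rel}_k(\g)$), the spectral sequence converges strongly, and the vanishing of the entire $E^2$-diagonal therefore forces the vanishing of $_{n}H^{rel}_{k-1}(\g)$.
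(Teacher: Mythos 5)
Your proposal is correct and follows essentially the same route as the paper: both arguments bootstrap between the long exact sequence of Proposition 3.1 (to pass from $_{n}H^{rel}_{k-1}(\g)=0$ to $_{n}HL_{k}(\g;\kl)=0$) and the spectral sequence of the Theorem (to deduce $_{n}H^{rel}_{k-1}(\g)=0$ from the vanishing of $_{n}HL_{s}(\g;\kl)$ for $s\leq k-1$), by induction on degree. Your version is slightly more explicit about the isomorphism $_{n}HL_{k}(\g;\kl)\cong {}_{n}H^{rel}_{k-1}(\g)$ and about the convergence of the (finitely filtered) spectral sequence, but the underlying argument is the same.
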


\begin{proof}
It is clear from proposition 3.1 that $_{n}HL_0(\g;~\kl)=0.$ So $E_{r,0}^2 =0$ implying that $_{n}H^{rel}_0(\g)=0.$ Thus $_{n}HL_1(\g;~\kl)=0$ from Proposition 3.1. So $E_{0,1}^\infty = E_{1,0}^\infty =0,$ thus $_{n}H^{rel}_1(\g)=0.$ By induction; if $_{n}H^{rel}_k(\g)=0$ for all $k\leq m,$ then proposition 3.1 implies that $_{n}HL_k(\g;~\kl)=0$ for all $k\leq m+1.$ Therefore we have from theorem 3.4 that $E_{r,s}^2=0$ for $s\leq m+1.$ Thus $_{n}H^{rel}_k(\g)=0$ for all $k\leq m+1.$ Hence $_{n}H^{rel}_k(\g)=0$ for all $k\geq 0.$ We use proposition 3.1 again to obtain the result.
\end{proof}

%\begin{remark}
%Let $M$ be a corepresentation of $\g.$ The homology of Leibniz $n$-algebra with coefficients in $M$  is defined as $$ _{n}HL_*(\g;~M)=:HL_*(D_n(\g);~M\otimes\g)~~\cite{CJ}.$$ 
%The construction of the spectral sequence above can be done for $ _{n}HL_*(\g;~M).$  It suffices to modify the complexes $_{n}C^{rel}_*(\g)$ and $_{n}DR_*(\g),$ and the filtration $\{F_s^r\}_{r,s}$ accordingly. 
%\end{remark}

\end{document}